
\documentclass[11pt]{amsart}
\usepackage{amsmath}
\usepackage{amsfonts}
\usepackage{amsthm}
\usepackage{amssymb}
\usepackage{mathrsfs}
\usepackage{latexsym}
\usepackage{verbatim}
\usepackage{diagrams}
\usepackage{a4wide}
\usepackage{enumerate}
\usepackage{hyperref}

\renewcommand{\thetheoremName}

\def\Iw{\mathrm{Iw}}
\def\GL{\mathrm{GL}}
\def\vareps{\varepsilon}

\def\cris{\mathrm{cris}}
\def\asp{\mathrm{asp}}

\def\T{\mathbf{T}}

\def\Ee{\mathscr{E}}

\newarrow{Equals}{=}{=}{=}{=}{=}
\newarrow{pots}{.}{.}{.}{.}{>}
\def\m{\mathfrak{m}}
\def\T{\mathbf{T}}
\def\Z{\mathbf{Z}}

\def\OL{\mathcal{O}}

\def\Q{\mathbf{Q}}
\def\F{\mathbf{F}}

\def\PGL{\mathrm{PGL}}
\def\R{\mathbf{R}}
\def\C{\mathbf{C}}

\def\End{\mathrm{End}}

\def\GSp{\mathrm{GSp}}
\def\PGSp{\mathrm{PGSp}}

\def\Sp{\mathrm{Sp}}
\def\GL{\mathrm{GL}}
\def\rank{\mathrm{rank}}

\def\Qbar{\overline{\Q}}

\def\ad{\mathrm{ad}}
\def\SL{\mathrm{SL}}

\def\SU{\mathrm{SU}}

\def\eps{\epsilon}

\def\Frob{\mathrm{Frob}}
\DeclareMathOperator{\Spec}{\mathrm{Spec}}

\newtheorem{theorem}{Theorem}[section]

\newtheorem{lemma}[theorem]{Lemma}

\def\ord{\mathrm{ord}}

\def\rbar{\overline{r}}

\newcommand{\UZ}{U \kern-.1em{Z}}

\newcommand{\VF}{V \kern-.07em{F}}

\def\Sp{\mathrm{Sp}}

\begin{document}

\def\fnot#1{\footnote{#1}\marginpar{\arabic{footnote}}}
\newcommand{\mar}[1]{\marginpar{\tiny #1}}

\title{Bloch--Kato conjectures for automorphic motives} 
\author{Frank Calegari \and David Geraghty \and Michael Harris} 
\thanks{The first author was supported in part by  NSF  Grant
  DMS-1404620. The second author was
  supported in part by NSF Grants DMS-1200304 and DMS-1128155. The third author
  was supported in part by  NSF DMS-1404769. 
  M.H.'s research received funding from the European Research Council under the European Community's Seventh Framework Programme (FP7/2007-2013) / ERC Grant agreement no. 290766 (AAMOT)}
  \subjclass[2010]{11F33, 11F80.}

\maketitle
\tableofcontents

\section{Introduction}

Assume (for this paragraph only) the standard conjectures, 
and suppose that $M$ is a pure irreducible Grothendieck motive over $\Q$ with coefficients
in (say) a totally real field $E$.
We make no assumption on the regularity or self-duality of $M$. According to conjectures
of Hasse--Weil, Langlands, Clozel, and others, one expects that the motive $M$
is automorphic, and corresponds to an algebraic cuspidal automorphic representation $\pi$
for $\GL(n)/\Q$ such that $L(\pi,s) = L(M,s)$. 
By a theorem of Jacquet and Shalika~\cite{J}, the $L$-function
$$L(M \times M^{\vee},s) = L(\pi \times \pi^{\vee},s)$$
is meromorphic for $\mathrm{Re}(s) > 0$ and
has a simple pole at $s = 1$. Let $\ad^0(M)$ be the pure motive of weight
zero  with coefficients in $E$ such that $\ad^0(M) \oplus E =  M \times M^{\vee}$.
Then
$$L(\ad^0(M),s) = \frac{L(\pi \times \pi^{\vee},s)}{\zeta(s)},$$
and $L(\ad^0(M),1) \ne 0$ is finite.
According to conjectures of Deligne and Bloch--Kato~\cite{BK}, for any pure de Rham representation
$V$, there is an equality:
$$ \dim H^1_f(G_{\Q},V) - \dim H^0(G_{\Q},V) = \ord_{s=1} L(V^*,s).$$
In particular, if we take $V = V^* = \ad^0(M)$, then we expect that $H^1_f(G_{\Q},\ad^0(M))$ should
vanish.
This is a special case of the more general fact that $H^1_f(\Q,V)$ should be trivial for any $p$-adic
representation $V$ arising from a 
pure motive $M$ of weight $w \ge 0$.
One also conjectures that the value of
 $L(\ad^0(M),1)$, 
 after normalization by some suitable period should lie in $\Q^{\times}$.
Moreover, after equating $M$ with its \'{e}tale realization for some prime $p$, the normalized
$L$ function  should have the same valuation as the order of a corresponding Selmer group
$H^1_{f}(\Q,\ad^0(M) \otimes \Q_p/\Z_p)$.

\medskip

No longer assuming any conjectures,
suppose that $M = \{r_{\lambda}\}$ is now a weakly compatible system of $n$-dimensional
 irreducible Galois representations of $G_{\Q}$, and
suppose moreover that $M$ is automorphic,  that is, it  corresponds to a cuspidal form $\pi$ for
$\GL(n)/\Q$ in a manner compatible with the local Langlands correspondence. Then, even without the standard conjectures,  it makes
sense to ask, for a $p$-adic representation
$r: G_{\Q} \rightarrow \GL_n(\OL)$ coming from $M$ (for some finite extension  $K/\Q_p$ with
ring of integers $\OL$),  if the Selmer group
$$H^1_f(\Q,\ad^0(r) \otimes K/\OL)$$
is finite. Theorems of this
kind were first proved for $n = 2$ by Flach~\cite{Flach}, and
they are also closely related to modularity lifting theorems as proved by 
Wiles~\cite{W,TW}, see (in particular)~\cite{FDG}.  More precisely, the order of this group
is related to the order of a congruence ideal between modular forms. 
In this paper, we prove versions of these results for modular
abelian surfaces and (conditionally) compatible
families of~$n$-dimensional representations whose existence was only recently proved to exist~\cite{HLTT}. 
The main theorem is the following.

\begin{theorem} \label{theorem:main}
Let $A/\Q$ be a semistable modular abelian surface
with~$\End(A) = \Z$. Let $p$ be a prime such that:
\begin{enumerate}
\item $p$ is sufficiently large with respect to some constant depending only on~$A$.
\item $A$ is ordinary at~$p$, and if  $\alpha, \beta$ are the unit root eigenvalues of $D_{\cris}(V)$, then
$$(\alpha^2-1)(\beta^2-1)(\alpha  - \beta)(\alpha^2 \beta^2 - 1) \not\equiv 0 \mod p.$$
\end{enumerate}
Then 
$$H^1_f(\Q,\mathrm{asp}^0(r) \otimes \Q_p/\Z_p) = 0$$
 where $\asp^0(r)$ is the $10$-dimensional adjoint representation of
$\PGSp(4)$.
Moreover, the set of primes~$p$ satisfying these conditions has density one.
\end{theorem}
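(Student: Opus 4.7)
The strategy is to combine an $R = \T$ theorem for the Galois representation attached to $A$ with Wiles's numerical criterion, relating the order of the adjoint Selmer group to a congruence module which, for $p$ large, will be forced to be trivial. I would fix a $p$-adic realization $r : G_\Q \to \GSp_4(\OL)$ of the compatible system attached to $A$, and let $\bar{r}$ be its reduction. Since $A$ is semistable with $\End(A) = \Z$, a Serre--Pink type theorem guarantees that for $p$ sufficiently large the image of $\bar{r}$ is essentially all of $\GSp_4(\F_p)$, so that $\bar{r}$ is absolutely irreducible with image large enough to feed a Taylor--Wiles argument; moreover, congruences between $\pi$ and other cuspidal automorphic forms of the same level can only occur for finitely many $p$ depending on $A$. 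The plan is then to set up an ordinary deformation ring $R$ parameterizing symplectic lifts of $\bar{r}$ that are ordinary at $p$ and minimally ramified elsewhere, together with the matching ordinary Hecke algebra $\T$ on Siegel modular forms. The four non-vanishing conditions on $\alpha, \beta$ modulo $p$ are exactly what is needed for the vanishing of the relevant local $H^0$ groups at $p$, which makes the ordinary local deformation problem formally smooth of the expected dimension and keeps the Euler characteristic count underlying the patching argument under control.

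The hard part will be the $R = \T$ theorem itself. The Galois representation attached to $A$ has Hodge--Tate weights $(0,0,1,1)$, so the corresponding Siegel eigenform has parallel weight $2$ and sits at the boundary of the cohomological range: the archimedean component of $\pi$ is a holomorphic limit of discrete series, and so $\pi$ does not contribute to the \'{e}tale cohomology of a Siegel threefold with regular coefficients. Classical Taylor--Wiles methods therefore do not apply directly, and one must instead work in the Calegari--Geraghty framework, patching mod-$p$ cohomology groups (potentially with nontrivial torsion) and carefully controlling the defect between expected and actual dimensions of the patched module. Establishing the required freeness and complete intersection properties of the patched module in this weight-$2$, non-cohomological setting will be the technical heart of the argument, and is where previous work of the authors enters decisively.

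Once $R = \T$ and the patched module is known to be a complete intersection, Wiles's numerical criterion will identify $\# H^1_f(\Q, \asp^0(r) \otimes \Q_p/\Z_p)$ with the order of the congruence module attached to $\pi$; for all $p$ outside a finite set depending on $A$ the latter is trivial, giving the claimed vanishing of the Selmer group. For the density-one statement, openness of the image of $G_\Q$ in $\GSp_4(\widehat{\Z})$ (a consequence of $\End(A) = \Z$) combined with Chebotarev shows that the set of primes at which $A$ is ordinary and the listed Frobenius invariants are all nonzero modulo $p$ is cut out by nonempty open conjugation-invariant conditions on Frobenius, and hence has density one.
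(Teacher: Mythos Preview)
Your overall strategy is correct and matches the paper: establish an $R = \T$ theorem in the Calegari--Geraghty framework (as in~\cite{CG2}), observe that for large~$p$ one has~$\T \simeq \OL$, and conclude that the Selmer group (which is the tangent space to~$R$) vanishes. The invocation of Wiles's numerical criterion is unnecessary overhead: once $R \simeq \T \simeq \OL$ you are already done. Two points, however, deserve correction.

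First, you do not identify the specific new input that makes the patching work in parallel weight~$2$. The method of~\cite{CG2} requires vanishing of $H^i(X_1(Q),\omega^2)_{\m_\alpha}$ for~$i \ge 2$ on Siegel threefolds at all auxiliary Taylor--Wiles levels; for other low-weight local systems this comes from Lan--Suh, which does not apply here. The paper's actual contribution is to prove this vanishing for all sufficiently large~$p$: any failure would, by finite generation of integral cohomology, produce a characteristic-zero eigenclass in degree~$\ge 2$ congruent to~$\pi$ at infinitely many primes, hence with the same Hecke eigenvalues as~$\pi$; but its transfer to~$\GL(4)$ is then cuspidal and tempered, so contributes only in degrees~$0$ and~$1$, a contradiction. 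Deferring this to ``previous work of the authors'' misses the point of the paper.

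Second, and more seriously, your density-one argument is wrong as stated. The conditions on~$\alpha,\beta$ modulo~$p$ are \emph{not} ``open conjugation-invariant conditions on Frobenius'' in any fixed group: they involve~$p$ simultaneously as the prime at which Frobenius is taken and as the modulus, so naive Chebotarev does not apply. The paper instead argues (after Ogus) as follows. Write the characteristic polynomial of~$\Frob_p$ as $X^4 + a_p X^3 + (2p+b_p)X^2 + p a_p X + p^2$. Each of the forbidden congruences ($\alpha \equiv \pm 1$, $\alpha \equiv \beta$, $\alpha\beta \equiv \pm 1$, or non-ordinarity) forces a fixed linear relation among $1,\,a_p,\,a_p^2,\,b_p$ modulo~$p$. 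The Weil bounds give $a_p^2,\,|b_p| = O(p)$, so such a congruence on a positive-density set upgrades to an \emph{exact} linear relation among $1,\,p,\,a_p,\,a_p^2,\,b_p$ on a positive-density set. Only then can one apply Chebotarev, now at a single fixed auxiliary prime~$\ell$ with image~$\GSp_4(\Z_\ell)$, to see that any such identity cuts out a measure-zero set of Frobenii and hence cannot hold with positive density.
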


For the families of Galois representations
constructed in~\cite{HLTT}, we  prove the following result.

\begin{theorem}  \label{theorem:conditional}
Let $\pi$ be a weight zero regular algebraic cuspidal representation for $\GL(n)/F$
for a CM field $F$ and coefficients in~$E$. 
Let~$\lambda$ be a prime of~$\OL_E$ dividing~$p$, and let
$$r = r_{\lambda}(\pi): G_{F} \rightarrow \GL_n(\OL)$$
 be a $p$-adic representation associated to $\pi$ with determinant~$\eps^{n(1-n)/2}$.
Assume that
\begin{enumerate}
\item $\rbar |_{F(\zeta_p)}$ has enormous image in the sense of~\cite{CG}~\S9.2,
\item Let~$v \ne p$ be a prime at which~$\pi$
is ramified.
\begin{enumerate} 
\item \label{convenient}  $\pi_v$ is an unramified twist of the Steinberg representation.
\item \label{difficult} 
The representations 
$r |_{G_{I_v}}$  and~$\rbar |_{G_{I_v}}$ are  unipotent.
For a topological generator~$\sigma_v  \in I_v$ of tame inertia,
$\rbar(\sigma)$ consists of a single block, namely:
$$\dim  \ker(\rbar(\sigma) - \mathrm{id}_n) = 1.$$
\end{enumerate}
\item $p$ is sufficiently large with respect to some constant depending only on~$\pi$.
\end{enumerate}
Assume all of Conjecture~B of~\cite{CG} except assumption~$(4)$.
Then the Selmer group~$H^1_f(F,\ad^0(r) \otimes K/\OL)$ is  trivial. 
\end{theorem}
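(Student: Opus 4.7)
The plan is to establish an $R = \T$ theorem via the Taylor--Wiles--Kisin patching method, in the generalized form of \cite{CG}, and then deduce vanishing of the Selmer group from the resulting structural properties of the deformation ring via Wiles' numerical criterion. Hypotheses (1)--(3) are tailored so that the Steinberg condition at ramified primes substitutes for the role that assumption~(4) of Conjecture~B plays in \cite{CG}.

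First, I set up a global Galois deformation problem $\Se$ for $\rbar$ consisting of lifts which: are crystalline of the correct Hodge--Tate weights at primes above $p$; have determinant $\eps^{n(1-n)/2}$; are unramified outside a finite set $S$ containing $p$ and all places of ramification of $\pi$; and at each $v \in S$ with $v \nmid p$ satisfy a unipotent ``Steinberg-type'' local condition adapted to $r|_{G_v}$. Hypothesis~\ref{difficult}---that $\rbar(\sigma_v)$ consists of a single Jordan block---is exactly what ensures that the corresponding local framed deformation ring is formally smooth over $\OL$ of the expected dimension; at primes above $p$, hypothesis~(3) makes the ordinary/crystalline local rings likewise formally smooth. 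The enormous image hypothesis~(1) supplies Taylor--Wiles primes at arbitrary level, while hypothesis~\ref{convenient} guarantees that the prescribed local types actually occur for the given~$\pi$.

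Patching as in \cite{CG} then produces a nonzero module $M_\infty$ over a patched deformation ring $R_\infty$, compatibly with an $S_\infty = \OL[[x_1,\ldots,x_s,y_1,\ldots,y_q]]$-action. The assumed portion of Conjecture~B supplies the essential inputs: existence of Galois representations attached to Hecke eigensystems appearing in mod-$p$ cohomology, local--global compatibility at $p$, and concentration of the $\m$-adic cohomology in a range of degrees $[q_0, q_0 + l_0]$ with the appropriate $S_\infty$-depth. Because every local deformation ring is formally smooth, the Taylor--Wiles numerology makes $R_\infty$ a formal power series ring over $\OL$ of dimension equal to $\dim S_\infty$; hence $M_\infty$, being a maximal Cohen--Macaulay module over the regular ring $R_\infty$, is automatically free. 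This freeness is precisely the conclusion for which assumption~(4) of Conjecture~B is invoked in \cite{CG}, and in our situation the Steinberg local conditions render that appeal unnecessary. Descending back to $R_\Se$ yields $R_\Se \iso \T_\m$, both being complete intersections finite flat over $\OL$.

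Finally, Wiles' numerical criterion identifies the Pontryagin dual of $H^1_f(F, \ad^0(r) \otimes K/\OL)$ with the cotangent module of $R_\Se$ at the augmentation $\phi_r$ attached to $r$, and since $R_\Se$ is a complete intersection this module has the same length as the congruence module $\OL/\phi_r(\Ann_{R_\Se}(\ker \phi_r))$. Hypothesis~(3)---that $p$ is sufficiently large with respect to $\pi$---rules out non-trivial congruences between $\pi$ and other forms with the same residual representation: for $p$ above an explicit bound depending only on $\pi$, the residue field of $\T_\m$ is just $\F_\lambda$ and the congruence module is trivial, giving $H^1_f(F, \ad^0(r) \otimes K/\OL) = 0$. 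The principal obstacle is the freeness of $M_\infty$ in the preceding step: verifying that the concentration and depth estimates from (the assumed part of) Conjecture~B interact correctly with the Steinberg local conditions, and that the resulting $R_\infty$ really has the expected dimension $\dim S_\infty$, is the technical heart of the argument.
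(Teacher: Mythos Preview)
Your proposal misidentifies what substitutes for assumption~(4) of Conjecture~B. That assumption is precisely the concentration of $H^i(Y,\OL/\varpi^n)_{\m}$ (and of the analogous groups at auxiliary level $Y_1(Q)$) in the range $[q_0,q_0+l_0]$. You list this concentration among ``the essential inputs'' supplied by ``the assumed portion of Conjecture~B,'' but it is exactly the part you are \emph{not} permitted to assume. In the patching framework of~\cite{CG}, concentration is what forces the patched module $M_\infty$ to have the correct $S_\infty$-depth; without it you do not know $M_\infty$ is maximal Cohen--Macaulay over $R_\infty$, and the step ``$R_\infty$ regular $\Rightarrow$ $M_\infty$ free'' has no content. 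The Steinberg hypotheses~(2a)--(2b) do not touch this issue at all: their only role is to make the local deformation rings at $v\nmid p$ formally smooth, so that $R_\infty$ is a power series ring. That is a separate (and easier) matter.

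The actual replacement for assumption~(4) is hypothesis~(3), and the mechanism is not ``no congruences'' but a vanishing theorem. For $p$ large relative to $\pi$, the integral cohomology $H^*(Y,\OL)$ is torsion-free, so a nonzero class in the wrong degree would persist in characteristic zero and would have to come from an automorphic representation $\Pi$ whose Hecke eigenvalues agree with those of $\pi$ at almost all places; but a class outside $[q_0,q_0+l_0]$ forces $\Pi$ to be non-tempered, contradicting strong multiplicity one for the tempered $\pi$. One then propagates this vanishing to $Y_1(Q)$ for Taylor--Wiles sets $Q$ via Hochschild--Serre and the identification $H^i(Y_0(Q),k)_{\m_\alpha}\simeq H^i(Y,k)_{\m}$. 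With concentration thus established, the machinery of~\cite{CG} (Theorems~5.16 and~6.4) applies verbatim and gives $R\simeq\T$; since for $p$ large one also has $\T\simeq\OL$, the tangent space of $R$ at $r$ is trivial and the Selmer group vanishes. Your use of~(3) only at the very end, to kill the congruence module, misses the main point.
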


Note that Conjecture~B of~\cite{CG} consists of five parts:
The first part concern local--global compatibility at~$v|p$, which is still open.
The second and third parts concern local--global compatibility at finite~$v$.
Here there is work in characteristic zero by Varma~\cite{Varma}, although
arguments of this nature should also  apply
to the Galois representations constructed by Scholze~\cite{Scholze}, at least
for modularity lifting purposes (since for modularity lifting it is usually
sufficient to have local--global compatibility up to~$N$-semi-simplification). 
The fifth part is essentially addressed
in~\cite{CG}, and also (in a different and arguably superior manner) in~\cite{KT}. Hence the
main remaining issue is local--global compatibility at~$\ell = p$.

Unlike the case of Theorem~\ref{theorem:main}, we do not know whether
 Theorem~\ref{theorem:conditional} applies for infinitely many~$p$. One reason is that we do not
even know that the representations~$r_{\lambda}$ are irreducible for sufficiently large~$p$.
Another is that we do not know whether~$r$
is a minimal deformation of~$\rbar$ at ramified primes~$v$
for sufficiently large~$p$,  although this is predicted to hold by some generalization of Serre's conjecture.
One example to which this does apply is to the Galois representations associated to
symmetric powers of non-CM elliptic curves~$E$ over~$F$. (The conclusion of the
theorem holds for~$F$ if it holds
for any  extension~$F'/F$, and any symmetric power of~$E/F$ is  potentially
modular over some CM extension~$F/F$ in this case by~\cite{10author}.
We deduce our theorems from the modularity lifting  results of~\cite{CG} and~\cite{CG2},
  of which we assume familiarity.
  One  obstruction to directly applying the theorems of~\cite{CG}
is that the modularity results of \emph{ibid.} require further unproven assumptions, namely,
the vanishing of certain cohomology groups outside a prescribed range. The main observation
here is that vanishing in these cases may be established for all sufficiently large $p$.

For automorphic representations for~$\GL(n)$, we require the extra assumption of local--global compatibility at~$v|p$,
which is not yet known in full generality. Some results along this lines have very recently been announced in~\cite{10author}, although they 
 are not strong enough to give a completely unconditional
proof of Theorem~\ref{theorem:conditional}. One problem is that 
\cite[Theorem~4.5.1]{10author} requires the hypothesis that~$F$ contains an imaginary
quadratic field in which~$p$ splits, which has to fail for a set of primes~$p$ of positive density. It may be possible
to give an unconditional version of Theorem~\ref{theorem:conditional} under some such assumption on~$p$,
although we do not pursue this here, in part because we would still be unable
to establish for a general~$\pi$ that condition~(\ref{difficult}) holds for infinitely many~$p$.
  (The nilpotent ideal of~\cite{Scholze} and~\cite{10author} would also be an annoying
complication.)
Similarly the assumption~(\ref{convenient})  that~$\pi_v$ is a twist of Steinberg representation
can (in principle) be weakened to the weaker assumption 
that~$\pi_v$ is of the form 
$\Sp_{n_1}(\chi_1) \boxplus \Sp_{n_2}(\chi_2)
\ldots \boxplus \Sp_{n_k}(\chi_k)$
for some partition~$n= \sum_{i=1}^{k} n_i$. The main reason we do
\emph{not} do this is that it would require a more precise discussion
of local--global compatibility at~$v \nmid p$, and in particular 
a refinement of Conjecture~B of~\cite{CG}.

\subsection{Acknowledgements}

The  authors thank George Boxer for pointing out an oversight in the first version of this note.

\section{Relation with special values of periods}

The Bloch--Kato conjecture actually gives a more precise prediction of the exact order of the Selmer group
in terms of the value of the~$L$-function divided by a certain motivic period. One can think of this as two
separate conjectures. The first is to show that the normalization of~$L(1)$ by a suitable period is indeed rational.
The second is to relate the corresponding~$p$-adic valuation of this ratio to the order of a Selmer group.
Our method naturally relates the order of a Selmer group to a certain tangent space. On the other hand,
for most of the Galois representations we consider, it is not known whether there exists
a corresponding motive, and so it is not clear exactly what it means to prove rationality. There are some formulations
where one can establish certain forms of rationality (or even integrality) with respect to periods defined
in terms of automorphic integrals (see, for example,~\cite{BR}, \cite{GHL}, and also~\cite{Urban}). However, it is not clear
to the authors how these results exactly relate to the (sometimes conjectural) motivic periods. An interesting
test case is the following. Suppose that
$$\rho: G_{\Q} \rightarrow \GL_2(\C)$$
is an irreducible odd representation. According to the Artin conjecture (known in this case, see~\cite{BuzzT,BuzzWild,KhareW,KhareW2,KisinLimo}),
one knows that~$\rho$ is modular of weight one. If one chooses a prime~$p$, and supposes that~$\rho$ has a model
over~$\OL$, the finiteness of the Selmer group~$H^1_f(\Q,\ad^0(\rho) \otimes K/\OL)$ is a consequence of the finiteness
of the~$p$-class group of~$\Q(\ker(\rho))$. (The former is a quotient of the latter.) 
The methods of this paper (following~\cite{CG}) show that, at all primes~$p > 2$ such that~$\rho$ is unramified, the Selmer group~$H^1_f(\Q,\ad^0(\rho) \otimes K/\OL)$ is detected by congruences between the modular form~$f$ and other Katz modular forms of weight one which may not lift to characteristic zero. In particular, there exist such congruences if and only if~$H^1_f(\Q,\ad^0(\rho) \otimes K/\OL)$ is non-zero. However, unlike in the case of higher weight modular forms, there does not seem to be an \emph{a priori} way to relate
this to a normalization of the adjoint~$L$ function~$L(\ad^0(\rho),1)$ (which in this case is an Artin~$L$-function). The issue is that all such constructions (following Hida~\cite{HidaCong}) proceed
by understanding various parings on the Betti cohomology of  arithmetic groups in characteristic zero, whereas weight one Katz modular
forms only have an interpretation in terms of coherent cohomology. Even in cases where one does have access to Betti cohomology,
say for regular algebraic  cuspidal automorphic representations for~$\GL(n)/F$ (even for~$\GL(2)$ over imaginary quadratic fields~$F$), it is not so clear whether the cohomological pairings one can define give the ``correct'' regulators or merely the regulators up to some
finite multiple related to the torsion classes in cohomology. Since we have nothing to say about how to resolve these issues,
we follow Wittgenstein's dictum~(\cite{WT}~\S7) and say no more about them.

\section{Vanishing Theorems}

The main idea of this paper is to note that the various vanishing theorems which are required inputs for the method of~\cite{CG,CG2} may
be established at least for~$p$ sufficiently large. This is not so useful for applications to modularity --- if~$p$ is sufficiently large,
then any completion of the appropriate Hecke ring~$\T$ at a maximal ideal~$\m$ of residue characteristic~$p$ will be formally smooth of dimension one, and so
the only characteristic zero representation one can prove is modular is the representation one must assume is modular in the first place.
However, with respect to Selmer groups, this statement does have content --- it says that these representations will have
no infinitesimal deformations.

\subsection{Betti Cohomology}

Let $F$ be an imaginary CM field of degree $2d$. Let
$$l_0 := d \left( \rank(\SL_n(\C)) - \rank(\SU_n(\C)) \right) = d(n-1),$$
$$2 q_0 + l_0 = d \left( \dim(\SL_n(\C)) - \dim(\SU_n(\C)) \right) = d(n^2-1),$$
$$q_0 = \frac{d(n^2-n)}{2}.$$
Fix a tempered cuspidal automorphic representation $\pi$ for $\PGL(n)/F$ of weight zero with coefficients in $E$. 
Let~$Y = Y(K)$ be the corresponding arithmetic orbifold considered in~\S9 of~\cite{CG},
where~$K$ is chosen to be maximal at all unramified primes for~$\pi$ and Iwahori level~$\Iw_v$
for all ramified primes.
Let~$\T$ denote the (anemic) Hecke algebra defined as the~$\Z$-subalgebra of
$$\End \bigoplus_{k,m} H^k(Y(K),\Z/m \Z)$$
generated by Hecke endomorphisms~$T_{\alpha,i}$
for~$i \le n$ and~$\alpha$ which are units at primes dividing the level.
(cf~\cite[Definition~9.1]{CG}.)
For a prime $v$ of $\OL_E$,
let
$$\rbar_{v}: G_F \rightarrow \GL_n(k)$$
be the corresponding semi-simple Galois representation,
 and let $\m$ denote the corresponding maximal ideal of $\T$.

\begin{lemma} \label{lemma:vanishing} For all sufficiently large $v$, and~$\OL = \OL_{E,v}$, we have
$H^i(Y,\OL/\varpi^k)_{\m} = 0$  unless $i \in [q_0,\ldots,q_0 + l_0]$.
\end{lemma}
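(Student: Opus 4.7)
The plan is to reduce the mod-$\varpi^k$ statement to a characteristic-zero statement via a Bockstein argument, and to prove the characteristic-zero statement by separating Hecke eigensystems modulo large~$v$. Since~$Y = Y(K)$ is an arithmetic orbifold of finite type (admitting a Borel--Serre compactification), $H^*(Y,\Z)$ is a finitely generated~$\Z$-module. Let~$S_1$ denote the finite set of rational primes dividing~$|H^*(Y,\Z)_\tors|$; for~$v$ with residue characteristic outside~$S_1$, each~$H^i(Y,\OL)$ is an~$\OL$-free module, and the same holds after localization at~$\m$.

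The image of~$\T$ inside~$\End_\Q(H^*(Y,\Q))$ is a finite~$\Q$-algebra, so only finitely many Hecke eigensystems $\chi_1,\ldots,\chi_N\colon \T\to \overline{\Q}$ occur in~$H^*(Y,\C)$, each appearing in some specified degrees. The Vogan--Zuckerman computation of $(\gg,K_\infty)$-cohomology for tempered cohomological representations of~$\GL_n(\C)^d$, combined with the Franke decomposition of~$H^*(Y,\C)$ into contributions from automorphic representations, implies that the eigensystem~$\chi_\pi$ of the tempered cuspidal~$\pi$ of weight zero occurs only in degrees~$[q_0,q_0+l_0]$. Let~$S_2$ be the finite set of rational primes dividing at least one nonzero value of~$\chi_j - \chi_{j'}$ on some fixed finite set of generators of~$\T$; for~$v$ with residue characteristic outside~$S_2$, the reductions $\overline{\chi}_j\colon \T\to\overline{\F}_v$ are pairwise distinct, and so~$\m = \ker \overline{\chi}_\pi$ appears in the support of~$H^i(Y,K)$ only for~$i\in[q_0,q_0+l_0]$.

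Combining these two steps, for~$v$ of residue characteristic outside the finite set~$S_1\cup S_2$ and for~$i\notin[q_0,q_0+l_0]$, the module~$H^i(Y,\OL)_\m$ is simultaneously~$\OL$-free and~$\OL$-torsion, hence zero. To propagate this to mod-$\varpi^k$ coefficients, I use the Bockstein short exact sequence
$$0\lra H^i(Y,\OL)_\m / \varpi^k \lra H^i(Y,\OL/\varpi^k)_\m \lra H^{i+1}(Y,\OL)_\m[\varpi^k] \lra 0.$$
For~$i\notin[q_0,q_0+l_0]$ the left-hand term vanishes, and the right-hand term vanishes either because~$i+1\notin[q_0,q_0+l_0]$ (whence~$H^{i+1}(Y,\OL)_\m=0$) or, in the boundary case~$i=q_0-1$, because~$H^{q_0}(Y,\OL)_\m$ is~$\OL$-free and therefore has no~$\varpi^k$-torsion.

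I expect the only substantive input to be the characteristic-zero tempered concentration of~$\chi_\pi$. This is classical, but it requires a careful analysis of Franke's filtration on~$H^*(Y,\C)$ for the noncompact orbifold~$Y$ in order to rule out non-cuspidal (Eisenstein) contributions outside the range~$[q_0,q_0+l_0]$. Once that is in hand, separation of Hecke eigensystems modulo~$v$ is a pigeonhole observation that costs only finitely many residue characteristics, so every sufficiently large~$v$ will do.
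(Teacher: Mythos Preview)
Your argument is correct and follows the same overall architecture as the paper's: reduce to characteristic zero via finite generation of integral cohomology, observe that only finitely many Hecke eigensystems occur in $H^*(Y,\C)$, and use that the one attached to~$\pi$ is concentrated in the tempered range~$[q_0,q_0+l_0]$; the Bockstein step you spell out is left implicit in the paper.

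Two points of comparison are worth recording. First, the paper dispatches what you flag as the ``substantive input'' (ruling out non-cuspidal contributions with eigensystem~$\chi_\pi$ outside~$[q_0,q_0+l_0]$) more cheaply than a direct analysis of Franke's filtration: any class outside the tempered range corresponds, via Franke, to an isobaric~$\Pi$ which is necessarily non-tempered by Borel--Wallach and Clozel, yet strong multiplicity one (Jacquet--Shalika) forces~$\Pi\simeq\pi$, a contradiction. This replaces your proposed filtration analysis with a one-line appeal to~\cite{J}. Second, the paper is a bit more careful about the orbifold nature of~$Y$: rather than asserting directly that~$H^*(Y,\Z)$ is finitely generated with the expected universal-coefficient behaviour, it passes to a neat~$K'\subset K$ so that~$Y(K')$ is a genuine manifold, and then works over~$\Z[1/M]$ with~$M$ the product of primes dividing~$[K:K']$. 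Your version is fine once one interprets~$H^*(Y,-)$ as group cohomology of an arithmetic group of type~$FP_\infty$, but the paper's maneuver makes the reduction to a manifold explicit.
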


\begin{proof} Assume otherwise.  Pick a neat finite index subgroup~$K' \subset K$, and a corresponding
 Galois cover~$Y'  = Y(K') \rightarrow Y = Y(K)$ where~$Y'$ is now a manifold.
It follows that~$H^*(Y',\Z)$ is finitely generated, and thus~$H^*(Y,\Z[1/M])$ is also finitely generated where~$M$ denotes the
product of primes dividing~$[K:K']$.
We now assume that~$v$ has residue characteristic prime to~$[K:K']$.
Since $H^*(Y,\Z[1/M])$ is  finitely generated, the groups 
$H^*(Y,\OL) = H^*(Y,\Z[1/M]) \otimes \OL$ are torsion free and of finite rank over $\OL$ for all $i$ when $\OL$ has sufficiently large residue characteristic.
Moreover, there exist only finitely many systems of eigenvalues which occur in $H^*(Y,\R)$. 
Assuming that the result if false (and there are infinitely many~$v$), we deduce that
 there exists an eigenclass $[c]$ in $H^i(Y,\OL_E)$ with $i \notin [q_0,\ldots,q_0 + l_0]$
such that the action of $\T$ on $[c]$ has support at $\m$ for infinitely many primes $v$ of $\OL_E$.
By the Chinese remainder theorem,  the Hecke eigenvalues of $[c]$ coincide with those of $\pi$.
We now show that~$[c]$ corresponds to an automorphic form~$\Pi$ which must simultaneously be non-tempered
and yet isomorphic to~$\pi$, giving a contradiction. 
Eigenclasses in cohomology may be realized by isobaric automorphic representations
(see~\cite[Thm~2.3]{FrankeFun}).  Suppose that $[c]$
corresponds to such an automorphic representation $\Pi$. Because of the degree where $[c]$ occurs, 
we deduce (from~\cite[Ch.II, Prop~3.1]{BW} and~\cite[Lemma~3.14]{clozel-ann-arb}) that $\Pi$ is not tempered.
Yet by strong multiplicity one~\cite{J}, there is an isomorphism $\Pi \simeq \pi$.
\end{proof}

For a more detailed discussion (in a more general setting) relating
the cohomology of local systems to tempered automorphic representations, see the proof of~\cite[Thm~2.4.9]{10author}.

\begin{theorem}   \label{theorem:vanishing}  Suppose that $H^i(Y,\OL/\varpi^n)_{\m} = 0$ unless $i \in [q_0,\ldots,q_0 + l_0]$.
 Let~$Q$ be a finite collection of 
 primes~$x$ such that~$\rbar(\Frob_x)$
has distinct eigenvalues and~$N(x) \equiv 1 \mod p$.
Then 
$$H^i(Y_1(Q),\OL/\varpi^n)_{\m_{\alpha}} = 0$$
for all $i \not\in [q_0,\ldots,q_0 + l_0]$, where:
\begin{enumerate}
\item  $\alpha = \{\alpha_x\}$ is a choice of eigenvalues of $\rbar(\Frob_{x})$ for each $x$ dividing $Q$.  
\item The localization takes place with respect to the Hecke algebra~$\T_{Q}$ consisting of the Hecke
operators prime to~$p$ and prime to the level together with~$U_{x} - \alpha_x$ for all~$x \in Q$.
\end{enumerate}
In particular, the conclusions of this theorem apply for all sufficiently large~$p$.
\end{theorem}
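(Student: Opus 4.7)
The plan is to analyze the Taylor--Wiles tower $Y_1(Q) \to Y_0(Q) \to Y$, where $Y_0(Q)$ denotes the arithmetic orbifold at level $K \cap K_0(Q)$ with Iwahori structure at each~$x \in Q$, and to treat the two stages separately. Throughout, I will pass silently to a neat finite-index subgroup $K' \subset K$ of index prime to~$p$, as in the proof of Lemma~\ref{lemma:vanishing}, so that all covers become genuinely finite \'etale; invariants under $K/K'$ are exact on $\OL/\varpi^n$-coefficients, so nothing is lost on descending back to the original level at the end.

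For the bottom step $Y_0(Q) \to Y$, I will exploit the action of the Iwahori--Hecke algebra at each~$x \in Q$ on $H^*(Y_0(Q), \OL/\varpi^n)$. The assumption that $\rbar(\Frob_x)$ has $n$ distinct eigenvalues ensures that the characteristic polynomial of $U_x$ factors into distinct linear factors modulo~$\m$, so Hensel lifting produces a complete set of orthogonal idempotents in the completion of the Iwahori--Hecke algebra at~$\m_\alpha$. The idempotent attached to the chosen eigenvalue~$\alpha_x$ cuts out a direct summand of $H^*(Y_0(Q), \OL/\varpi^n)_{\m_\alpha}$ which, via the natural pushforward--pullback composition along $Y_0(Q) \to Y$, is identified with $H^*(Y, \OL/\varpi^n)_{\m}$. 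In particular this localization vanishes outside $[q_0, q_0 + l_0]$ by hypothesis.

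For the top step $Y_1(Q) \to Y_0(Q)$, the Galois group $\Delta_Q$ is a finite $p$-group (since $N(x) \equiv 1 \pmod p$), so $R := (\OL/\varpi)[\Delta_Q]$ is an Artin local ring with residue field $\OL/\varpi$. The complex $C := R\Gamma(Y_1(Q), \OL/\varpi)_{\m_\alpha}$ is perfect as a complex of $R$-modules (the cover being finite \'etale), and base change yields
$$C \otimes^{L}_{R} (\OL/\varpi) \simeq R\Gamma(Y_0(Q), \OL/\varpi)_{\m_\alpha},$$
whose cohomology is supported in $[q_0, q_0 + l_0]$ by applying Step~1 with $\OL/\varpi$-coefficients (legitimate because Lemma~\ref{lemma:vanishing} supplies the vanishing hypothesis at all torsion levels). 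Nakayama's lemma for perfect complexes over a local ring then forces $C$ to have cohomology concentrated in $[q_0, q_0+l_0]$. A short induction on $k$ via the long exact sequences attached to $0 \to \OL/\varpi \to \OL/\varpi^{k+1} \to \OL/\varpi^k \to 0$ upgrades this to the desired vanishing with $\OL/\varpi^n$-coefficients; the final sentence of the theorem then follows at once from Lemma~\ref{lemma:vanishing}.

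The main obstacle is the first step. In the mod-$\varpi^n$ setting one must verify that the Iwahori--Hecke idempotent actually acts on the integral cochain complex (not merely on characteristic-zero cohomology) and that its image is correctly identified with the pullback from~$Y$, rather than with some a priori abstract direct summand. The distinct-eigenvalue assumption on $\rbar(\Frob_x)$ is precisely what produces these idempotents via Hensel lifting, while the identification with $H^*(Y)_\m$ is a form of the Bernstein-style parabolic decomposition of the Iwahori--Hecke algebra, as used throughout the Taylor--Wiles literature (compare the discussion in~\cite{CG}).
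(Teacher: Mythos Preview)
Your argument is correct, and the key input---the identification $H^*(Y_0(Q),\OL/\varpi^n)_{\m_\alpha}\simeq H^*(Y,\OL/\varpi^n)_{\m}$ via Iwahori--Hecke idempotents coming from the distinct-eigenvalue hypothesis---is exactly what the paper invokes (citing \S9.4 of~\cite{CG} and Lemma~6.25(4) of~\cite{KT}). The difference lies in how the step from $Y_0(Q)$ to $Y_1(Q)$ is handled. The paper first uses Poincar\'e duality (together with the observation that the boundary cohomology vanishes after localizing at~$\m$, since $\rbar$ is absolutely irreducible) to reduce to degrees $i<q_0$; it then takes the minimal bad~$i$, applies Hochschild--Serre for the $\Delta_Q$-cover, and uses that a $p$-group acting on a non-trivial $p$-torsion module has non-trivial invariants to force $H^i(Y_0(Q),k)_{\m_\alpha}\ne 0$, yielding the contradiction. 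Your route---viewing $R\Gamma(Y_1(Q),k)_{\m_\alpha}$ as a perfect complex over the local ring $k[\Delta_Q]$ whose reduction modulo the augmentation ideal is $R\Gamma(Y_0(Q),k)_{\m_\alpha}$, and then invoking Nakayama for perfect complexes---is a clean repackaging of the same phenomenon that handles both ends of the range symmetrically and avoids the separate discussion of Poincar\'e duality and the boundary. The paper's approach makes the role of the irreducibility of~$\rbar$ (via the boundary) more visible, whereas yours hides it inside the identification at level~$Y_0(Q)$; but the underlying content is the same.
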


\begin{proof}  We first note that the assumption (of absolute irreducibility) on~$\rbar$ ensures that the cohomology of the boundary
vanishes after localization at~$\m$. This is because the cohomology of the boundary may be computed inductively
from the cohomology of Levi subgroups and then of~$\GL_{n_i}$ for~$n_i < n$ (see, for example, \S3 and in particular
Prop.~3.3 of~\cite{MR3530448}), and so  the corresponding Galois
representations associated to these classes are reducible.

By Poincar\'{e} duality (and the discussion above concerning the vanishing of the boundary cohomology localized at~$\m$), it suffices to prove the result for $i < q_0$. Let $i$ be the
smallest integer for which the inequality is violated. Then, by the Hochschild--Serre spectral
sequence, we deduce that 
$$H^i(Y_0(Q),k)_{\m_{\alpha}} \ne 0.$$
As in~\S9.4 of~\cite{CG} (see also Lemma~6.25(4) of~\cite{KT}),  we deduce that $H^i(Y_0(Q),k)_{\m_{\alpha}} \simeq H^i(Y,k)_{\m}$. The
result then follows 
by Lemma~\ref{lemma:vanishing}.
\end{proof}

The modularity lifting theorems of~\cite{CG} are proved by constructing
sets of so-called ``Taylor--Wiles primes'' which have the property that imposing local
conditions at these primes annihilates (as much as possible) the dual Selmer group.
The assumption that~$\rbar_v |_{F(\zeta_p)}$ has enormous image 
implies that there exists arbitrarily many
sets~$Q$ of auxiliary Taylor--Wiles primes satisfying the hypothesis that~$\rbar(\Frob_x)$ has distinct
eigenvalues. In particular,  Theorem~\ref{theorem:vanishing} serves as a replacement
for Conjecture~B(4) of~\cite{CG}.
 (For a different  (and somewhat cleaner) treatment of Taylor--Wiles primes using the enormous image hypothesis, see~\cite{KT},
which is also used in~\cite{10author}.)

\subsection{Coherent Cohomology}

Let~$\OL$ denote the ring of integers in some finite extension of~$\Q_p$.
Let~$X$ denote a toroidal compactification of a Siegel~$3$-fold~$Y$ of level prime to~$p$ over~$\Spec \OL$, and let~$Z$ denote the minimal compactification.
Let~$\pi: \mathcal{A} \rightarrow Y$ denote the universal abelian variety, let~$\Ee = \pi_* \Omega^{1}_{\mathcal{A}/X}$,
let~$\omega = \det \Ee$, and, by abuse of notation, also let~$\omega$ denote the canonical extension of~$\omega$ to~$X$ or the corresponding ample line bundle on~$Z$.
Fix a cuspidal automorphic representation~$\pi$ for~$\GSp(4)/\Q$ corresponding to a modular abelian surface~$A$
which we assume has endomorphism ring~$\Z$ over~$\Q$, and hence to a cuspidal Siegel modular form of scalar weight~$2$. Let
$$\rbar_{p}: G_{\Q} \rightarrow \GSp_4(\F_p)$$
be the corresponding semi-simple representation for each prime~$p$. Let~$\m$
denote the corresponding maximal ideal of~$\T$. 

\begin{lemma} \label{testy} For all sufficiently large~$p$, and any set~$Q$ of auxiliary primes, we have
$$H^i(X,\omega^2_{\OL/\varpi^n})_{\m_Q} = 0$$
for~$i = 2$ and~$3$, 
where~$\m_{Q}$ denotes the maximal ideal in the Hecke algebra where operators at~$Q$ and~$p$ have been omitted. 
\end{lemma}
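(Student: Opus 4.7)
The overall strategy parallels the Betti analogue (Lemma~\ref{lemma:vanishing} and Theorem~\ref{theorem:vanishing}): first establish integrality of the relevant coherent cohomology for all but finitely many~$p$, then reduce to a vanishing statement in characteristic zero, and finally appeal to an automorphic classification of the eigensystems that can possibly contribute to the degrees in question.

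For the first step, I would spread~$X$ and~$\omega$ out over~$\Spec \Z[1/N]$, with~$N$ depending only on the tame level of~$X$. Since~$X$ is smooth and proper over this base, the complex $R\Gamma(X_{\Z[1/N]},\omega^2)$ is perfect, and each $H^i(X,\omega^2)$ is finitely generated over~$\Z[1/N]$. Its torsion is supported at a fixed finite set of primes, so for all~$p$ larger than some constant depending only on the tame level, the groups $H^i(X_{\OL},\omega^2)$ are $\OL$-flat and the formation of cohomology commutes with base change to $\OL/\varpi^n$. This reduces the assertion (for large~$p$) to showing that $H^i(X_{\C},\omega^2)_{\m_Q}$ vanishes for $i=2,3$.

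For the second step, I would apply Serre duality on the smooth projective threefold~$X$, using the identification $K_X = \omega^3 \otimes \mathcal{O}(-D)$, where~$D$ is the boundary divisor. This yields a Hecke-equivariant isomorphism $H^i(X_\C, \omega^2) \cong H^{3-i}(X_\C, \omega(-D))^\vee$, so the statement becomes the vanishing of $H^0(X_\C, \omega(-D))_{\m_Q}$ and $H^1(X_\C, \omega(-D))_{\m_Q}$. Both spaces are finite-dimensional, so only finitely many automorphic representations~$\Pi$ of~$\GSp(4)/\Q$ can contribute at the fixed level of~$X$. Using the description of cuspidal and non-cuspidal coherent cohomology in singular weight (after Faltings--Chai and Lan, together with Franke's spectral decomposition), these~$\Pi$ have archimedean component a limit-of-discrete-series or non-tempered representation with infinitesimal character appropriate to scalar weight~$1$.

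The remaining task is to show that, after localization at~$\m_Q$, none of these finitely many~$\Pi$ can occur. Non-tempered Eisenstein or residual contributions produce a reducible associated Galois representation, whereas~$\rbar$ is irreducible by the hypothesis $\End(A) = \Z$ together with the largeness of~$p$; this rules them out. CAP or endoscopic~$\Pi$ would contribute a Galois representation decomposing as a sum of pieces coming from~$\GL(2) \times \GL(2)$ or from Hecke characters via theta lifts, again contradicting the non-endoscopicity of~$\rbar$ once~$p$ is large enough to exclude accidental mod~$p$ congruences. Any genuinely tempered cuspidal~$\Pi$ in this finite list must be congruent to the Hecke eigensystem of~$\pi$ modulo~$p$, which likewise fails for~$p$ outside a finite set; and~$\pi$ itself, being holomorphic of scalar weight~$2$, contributes only to $H^0(X_\C, \omega^2)$ and not to~$H^2$ or~$H^3$. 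The auxiliary primes~$Q$ play no essential role, since~$X$ has level prime to~$p$ and the Hecke operators at~$Q$ are not used in forming~$\m_Q$; exactly the same argument goes through uniformly in~$Q$. The main obstacle I anticipate is the singular-weight automorphic description identifying the finite list of~$\Pi$ contributing to $H^1(X_\C, \omega(-D))$, which requires analysis of the cuspidal coherent cohomology of~$\omega(-D)$ beyond the regular-weight case covered by the standard BGG formalism, and is where the largeness hypothesis on~$p$ is absorbed.
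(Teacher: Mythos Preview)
Your overall strategy matches the paper's: reduce to characteristic zero via finite generation of coherent cohomology over~$\Z[1/N]$, then argue that no automorphic eigensystem contributing to the bad degrees can survive localization at~$\m_Q$ for infinitely many~$p$. The paper's execution is more direct in two respects. First, it does not pass through Serre duality to~$\omega(-D)$; it works with~$H^2$ and~$H^3$ of~$\omega^2$ throughout, so the weight-$1$ singular-weight analysis you flag as the main obstacle never arises. Second, in place of your case-by-case classification (Eisenstein, residual, CAP, endoscopic, other tempered cuspidal), the paper transfers any candidate~$\Pi$ to~$\GL(4)$: irreducibility of~$\rbar_p$ for large~$p$ forces the transfer to be cuspidal, and then strong multiplicity one on~$\GL(4)$ forces it to coincide with the transfer of~$\pi$; one then only needs the single archimedean fact that such a representation contributes to coherent cohomology of~$\omega^2$ in degrees~$0$ and~$1$. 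This packages your entire dichotomy into a one-line argument and, importantly, automatically handles the other members of the $L$-packet of~$\pi$ --- a case your treatment leaves implicit. (You assert that~$\pi$ itself contributes only to~$H^0$, but a packet-mate with the identical unramified Hecke eigensystem would be congruent to~$\pi$ for \emph{all}~$p$ and so is not excluded by your ``finitely many~$p$'' argument; the paper's claim is degrees~$0$ and~$1$, which covers the whole packet.) Your route is not wrong, but the transfer-to-$\GL(4)$ device is what buys the brevity and uniformity in~$Q$.
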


\begin{proof} This is a consequence of the proof of  Theorem~7.11 of~\cite{CG2}. We give a brief sketch here of the idea:
as in the proof of Lemma~\ref{lemma:vanishing}, we otherwise deduce that there exists a characteristic zero form in~$H^i(X,\omega^2_{\C})$ giving
rise to infinitely many of these classes. The representation~$\rbar_p$ will be irreducible for all sufficiently large~$p$ (because~$\End_{\Q}(A) = \Z$ --- see also the proof of Lemma~\ref{lemma:serre}). If follows that the
transfer of this form to~$\GL(4)$ must be cuspidal, and moreover
(by multiplicity one)
coincide with the transfer of the  representation coming from the
holomorphic Siegel modular form. But such a representation
only contributes to cohomology in degrees~$0$ and~$1$. (For details relating the coherent cohomology of Siegel threefolds and their
relation to automorphic representations we refer the reader back to~\cite{CG2}.)
\end{proof}

\begin{lemma} \label{lemma:reduce}  Suppose that $H^2(X,\omega^2_{\OL/\varpi^n})_{\m_Q} = 0$
for any set of auxiliary primes~$Q$, as in the statement of Lemma~\ref{testy}.
Suppose, moreover, that $\rbar_p$ is absolutely irreducible.
Then, for~$i \ge 2$,
$$H^i(X_1(Q),\omega^2_{\OL/\varpi^n})_{\m_{\alpha}}  = H^i(X_0(Q),\omega^2_{\OL/\varpi^n})_{\m_{\alpha}} = 0,$$
where $Q$ is any collection  of primes where~$\rbar(\Frob_x)$ has distinct eigenvalues, $N(x) \equiv 1 \mod p$, and
$\alpha = \{\alpha_x\}$ is any collection of eigenvalues of $\rho(\Frob_{x})$ for $x$ dividing $Q$. 
In particular, the conclusions of this theorem apply for all sufficiently large~$p$.
\end{lemma}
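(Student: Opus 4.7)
The plan is to parallel the argument of Theorem~\ref{theorem:vanishing} in the Betti setting: use the finite covers $X_1(Q) \to X_0(Q) \to X$, together with Hecke-eigenvalue localization at~$\m_\alpha$, to reduce everything to the hypothesized vanishing of $H^i(X,\omega^2_{\OL/\varpi^n})_{\m_Q}$ for $i \geq 2$. Since $X$ is a threefold, this vanishing is automatic for $i \geq 4$; for $i=2$ it is the explicit hypothesis of the lemma, and for $i=3$ it is supplied by Lemma~\ref{testy}.

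First I would reduce from $X_1(Q)$ to $X_0(Q)$. The cover $X_1(Q) \to X_0(Q)$ is \'etale on the interior with abelian Galois group $\Delta_Q$, a quotient of $\prod_{x \in Q} T(k_x)$ where $T$ is the diagonal torus of $\GSp(4)$. Although $|\Delta_Q|$ is divisible by~$p$, the $\m_\alpha$-localization decomposes $H^i(X_1(Q),\omega^2)_{\m_\alpha}$ according to the $U_x$-eigenvalues $\alpha_x$, so Hochschild--Serre collapses and yields
$$H^i(X_0(Q),\omega^2_{\OL/\varpi^n})_{\m_\alpha} \simeq H^i(X_1(Q),\omega^2_{\OL/\varpi^n})_{\m_\alpha}.$$
Then I would reduce from $X_0(Q)$ down to $X$: the cover $X_0(Q) \to X$ has parahoric-to-hyperspecial fibres $\GSp_4/B$ and is not Galois, but the Hecke-algebraic argument of~\S9.4 of~\cite{CG} and Lemma~6.25(4) of~\cite{KT} applies. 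The hypotheses $N(x)\equiv 1\pmod p$ and distinct eigenvalues of $\rbar(\Frob_x)$ furnish an idempotent in the local Hecke algebra at each $x \in Q$ cutting out a direct summand
$$H^i(X_0(Q),\omega^2_{\OL/\varpi^n})_{\m_\alpha} \simeq H^i(X,\omega^2_{\OL/\varpi^n})_{\m_Q}.$$
Combining the two reductions with the vanishing at level~$X$ completes the argument.

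The hard part will be justifying that the Hecke-localization argument, which is written for Betti cohomology in~\cite{KT}, transfers cleanly to coherent cohomology on the toroidal compactification. Two points need verification: first, the covers must extend acceptably to the boundary, which should be straightforward since $p$ (and hence the primes of $Q$) are coprime to the tame level; second, any boundary contributions must vanish after $\m_\alpha$-localization, which should follow from the absolute irreducibility of $\rbar_p$ exactly as in the proof of Theorem~\ref{theorem:vanishing}. The Hecke manipulations themselves use only that Hecke correspondences act on coherent cohomology in the usual way, so their adaptation is essentially formal.
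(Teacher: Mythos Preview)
Your outline mirrors the paper's two-step descent $X_1(Q)\to X_0(Q)\to X$, but the first step as written does not go through. The claim that Hochschild--Serre ``collapses'' after localizing at~$\m_\alpha$ is false: the Galois group~$\Delta_Q$ is a nontrivial $p$-group (indeed, the whole point of the Taylor--Wiles method is that $H^*(X_1(Q))_{\m_\alpha}$ carries a nontrivial $\OL[\Delta_Q]$-module structure whose coinvariants recover level~$X$), so $H^{>0}(\Delta_Q,-)$ does not vanish on $k$-vector spaces and the spectral sequence has many nonzero columns. Localizing at~$\m_\alpha$ merely picks out a Hecke eigenspace; it does nothing to trivialize the $\Delta_Q$-action. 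In particular your asserted isomorphism $H^i(X_0(Q),\omega^2)_{\m_\alpha}\simeq H^i(X_1(Q),\omega^2)_{\m_\alpha}$ has no reason to hold degree by degree.

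What Hochschild--Serre \emph{does} give is control in the lowest nonvanishing degree: if $i$ is minimal with $H^i(X_1(Q),\mathcal F)_{\m_\alpha}\ne0$, then the corner term $E_2^{0,i}=H^i(X_1(Q),\mathcal F)^{\Delta_Q}_{\m_\alpha}$ survives and injects into $H^i(X_0(Q),\mathcal F)_{\m_\alpha}$ (and is nonzero, since a $p$-group acting on a nonzero $k$-vector space has nonzero invariants). But you want vanishing in \emph{high} degrees $i\ge2$. The paper supplies the missing idea: Serre duality on the threefold exchanges $\omega^2$ with $\omega(-\infty)$ (since $K_X\cong\omega^3(-\infty)$) and sends $i\ge2$ to $3-i\le1$, after which the ``smallest~$i$'' argument runs exactly as in Theorem~\ref{theorem:vanishing}. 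Your second step $X_0(Q)\to X$ is then the $\GSp(4)$ adaptation of the Khare--Thorne idempotent argument carried out in~\S2.4 and Proposition~7.9.8 of~\cite{BCGP}; citing~\cite{KT} or~\S9.4 of~\cite{CG} alone is not enough, since the transfer to coherent cohomology on toroidal compactifications is precisely what~\cite{BCGP} provides.
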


\begin{proof} 
In~\cite{CG2}, a somewhat elaborate version of this result  is proved in Lemmas~7.4 and~7.5.
We instead, however, use the modified treatment of Taylor--Wiles primes by Khare and Thorne (cf.\cite[Lem.\ 6.25]{KT}),
as adapted for~$\GSp(4)$ in~\S2.4 and~\S7.9 of~\cite{BCGP}, which leads to a great simplification.
By d\'{e}vissage, we can reduce to the case where
 the coefficients are a finite field~$k$. 
  To deduce vanishing for~$X_1(Q)$, we first use Serre duality to reduce the problem to vanishing 
 of~$H^i(X_1(Q),\omega(-\infty))_{\m^*_{\alpha}}$ for~$i \le 1$. 
 (Serre duality is only Hecke equivariant up to a twist by a power of the cyclotomic character, which is
 recorded by the star.)
 Equivalently, 
 it suffices to show that, if~$i$ denotes the smallest integer such that~$H^i(X_1(Q),\omega(-\infty))_{\m^*_{\alpha}}$ is non-zero,
 then~$H^i(X,\omega(-\infty))_{\m_{Q}}$ is also non-zero.
 By Hochschild--Serre applied to the map~$X_1(Q) \rightarrow X_0(Q)$,  it
 suffices to show that, if~$i$ denotes the smallest integer such that~$H^i(X_0(Q),\omega(-\infty))_{\m^*_{\alpha}}$ is non-zero,
 then~$H^i(X,\omega(-\infty))_{\m^*_{Q}}$ is also non-zero.
 The result now follows as in the proof of Proposition~7.9.8 of~\cite{BCGP},
 which identifies these groups for all~$i$ under the Taylor--Wiles hypothesis using Lemmas~2.4.36 and~2.4.37 of~\cite{BCGP}.
 \end{proof}

 \section{Proofs}
 
 Let~$R$ denote the minimal deformation ring of~$\rbar$ defined as follows:
 \begin{enumerate} 
 \item {\bf Coherent Case:\rm} $R$ is the minimal ordinary
 deformation ring denoted by~$R^{\min} = R_{\emptyset}$ in~\S4 of~\cite{CG}.
 \item {\bf Betti Case:\rm} $R$ is the minimal ordinary deformation ring
 corresponding to the following conditions:
 \begin{enumerate} 
 \item If~$v$ is a prime of bad reduction (so we
 are assuming, for a topological generator~$\sigma$ of tame inertia,
 that~$\rbar(\sigma)$ is unipotent with a single block), then  we take the
 local deformation ring to be the ring~$R^1_v$ in~\S8.5.1 of~\cite{CG}.
 Note that, if~$r(\sigma)$ on~$A^n$ has characteristic polynomial~$(X-1)^n$,
 then (given our assumption on~$\rbar$) this deformation problem
 coincides with the minimal condition in
Definition~2.4.14 of~\cite{CHT}, namely,
that the map
 $$\ker(r(\sigma) -  \mathrm{id}_n)^r \otimes_{R} k \rightarrow \ker(\rbar(\sigma) -  \mathrm{id}_n)^r$$
 is an isomorphism (equivalently, surjection) for all~$r \le n$.
 (One can see this equivalence by induction --- $\rbar(\sigma)$ has a unique eigenvector over~$k$,
 which lifts to a unique eigenvector over~$A$ whose mod-$p$ reduction is non-trivial; now take
 the representation of~$A^{n-1}$ and~$k^{n-1}$ given by quotienting out by this eigenvector.)
 \item If~$v|p$ and~$p$ is sufficiently large with respect to~$n$
 and the primes which ramify in~$F$, we take deformations
 which are Fontaine--Laffaille of weight~$[0,1,\ldots,n-1]$.
 \end{enumerate}
 \end{enumerate}

 If one has an isomorphism~$R \simeq \T$ for all sufficiently large~$p$ satisfying
 the required hypothesis, then since one also will have an isomorphism~$\T \simeq \OL$,
 this would 
  immediately imply that the tangent space to~$R$ along the projection to~$\OL$ is trivial,
 and hence the corresponding adjoint Selmer groups are trivial.
 Theorem~\ref{theorem:conditional} is now an consequence of
 Theorem~5.16 of~\cite{CG}  and Theorem~6.4 of~\cite{CG},
 where we use the fact that the corresponding local deformation
 rings are formally smooth (as follows from Corollary~2.4.3 of~\cite{CHT}
 and Lemma~2.4.19 of~\cite{CHT}),
 and where we use
 Theorem~\ref{theorem:vanishing} as a substitute for
 the vanishing assumption required in Conjecture~B of \emph{ibid}.
 Equally, Theorem~\ref{theorem:main} follows as in Theorem~1.2 of~\cite{CG2},
 where the vanishing result of Lemma~\ref{lemma:reduce} replaces the vanishing
 results of Lan--Suh~\cite{LanSuh} for other low weight local systems used in~\cite{CG2}.
 The modularity argument above requires a large image hypothesis which we assume
 in Theorem~\ref{theorem:conditional} and which we are required to prove (for sufficiently
 large~$p$) for Theorem~\ref{theorem:vanishing}. Furthermore, we must justify
 the claim in Theorem~\ref{theorem:vanishing} that the assumptions hold for a set of
 primes~$p$ of density one. Hence it remains to prove the following:
 
 \begin{lemma} \label{lemma:serre}
 Let~$A$ be a semistable abelian surface of conductor~$N$ with~$\End(A) = \Z$.
 Then:
 \begin{enumerate}
 \item  For sufficiently large~$p$,
  the residual representation~$\rbar_{p}:G_{\Q} \rightarrow \GSp_4(\F_p)$
 is surjective with minimal conductor~$N$. 
 \item For a set of primes~$p$ of density one,
 we have
 $$ \alpha \beta (\alpha^2-1)(\beta^2-1)(\alpha  - \beta)(\alpha^2 \beta^2 - 1) \not\equiv 0 \mod p.$$
 \end{enumerate}
  \end{lemma}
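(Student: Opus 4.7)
For part (1), the plan is to invoke Serre's open image theorem for abelian surfaces: since $\End_{\Qbar}(A) = \Z$ (which follows from $\End(A) = \Z$ together with modularity, ruling out extra geometric endomorphisms of RM, QM, or CM type), the residual representation $\rbar_p \colon G_\Q \to \GSp_4(\F_p)$ is surjective for all $p$ sufficiently large. The minimal conductor claim at each $\ell \mid N$ reduces, by semistability, to observing that the $\ell$-adic tame monodromy operator is a nontrivial unipotent whose integrally defined nilpotent logarithm reduces to a nonzero matrix mod $p$ for all $p$ outside a finite exceptional set; hence the tame conductor at $\ell$ is preserved upon reduction. Since $p$ is coprime to $N$ for $p \gg 0$, no new ramification arises at $p$.

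For part (2), write the characteristic polynomial of $\Frob_p$ on $T_\ell A$ (any $\ell \ne p$) as $T^4 - a_p T^3 + b_p T^2 - p a_p T + p^2$. When $A$ is ordinary at $p$, one has $\alpha + \beta \equiv a_p \pmod{p}$ and $\alpha \beta \equiv b_p \pmod{p}$. Factoring the symmetric expressions
\[ (\alpha^2 - 1)(\beta^2 - 1) = (b_p + 1 - a_p)(b_p + 1 + a_p), \quad (\alpha^2 \beta^2 - 1) = (b_p - 1)(b_p + 1), \]
and using $(\alpha - \beta)^2 = a_p^2 - 4 b_p$, the required non-vanishing becomes the collection of conditions $b_p,\ a_p^2 - 4 b_p,\ b_p \pm a_p + 1,\ b_p \pm 1 \not\equiv 0 \pmod{p}$. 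Each is a polynomial $P(a_p, b_p)$ of degree $\leq 2$ with integer coefficients. By the Weil bounds $|a_p| \leq 4\sqrt{p}$ and $|b_p| \leq 6 p$, one has $|P(a_p, b_p)| = O(p)$, so the condition $p \mid P(a_p, b_p)$ forces $P(a_p, b_p) = k p$ for $k$ in a finite set $S_P$ of integers independent of $p$.

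For each $k \in S_P$, we bound the density of primes $p$ with $P(a_p, b_p) = kp$ by applying Chebotarev to an auxiliary mod-$\ell$ representation $\rbar_\ell$, which is surjective onto $\GSp_4(\F_\ell)$ for all large $\ell$ by part (1). Writing $g = \rbar_\ell(\Frob_p)$, the similitude character satisfies $p \equiv \nu(g) \pmod{\ell}$, so the equation $P(a_p, b_p) = kp$ reduces mod $\ell$ to $P(\tr g, \tr \Lambda^2 g) \equiv k \nu(g) \pmod{\ell}$ on $g \in \GSp_4(\F_\ell)$. One verifies (by evaluating at the identity or a suitable diagonal test element) that in every case this cuts out a proper Zariski-closed subvariety of $\GSp_4$, so the proportion of $g \in \GSp_4(\F_\ell)$ satisfying it is $O(\ell^{-1})$. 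By Chebotarev, the density of the relevant primes $p$ is $O(\ell^{-1})$; letting $\ell \to \infty$ yields density zero. Summing over the finite set $S_P$ and the finitely many polynomials $P$ gives density one for the set where all factors are nonzero mod $p$. The main point is the polynomial bound $|P(a_p, b_p)| = O(p)$, which reduces the problem to finitely many Chebotarev conditions; given this, the argument is a direct generalization of Serre's density-one argument for non-supersingular primes on a non-CM elliptic curve.
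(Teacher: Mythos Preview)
Your proof is correct and follows essentially the same strategy as the paper: Serre's open image theorem plus a finite-exceptional-set argument for part~(1), and for part~(2), using the Weil bounds to convert each mod-$p$ vanishing condition into finitely many exact identities $P(a_p,b_p)=kp$, each then excluded by Chebotarev together with the large mod-$\ell$ image. The only difference is packaging: the paper isolates the Chebotarev step as a separate Ogus-style lemma (``no fixed linear relation among $1,p,a_p,a_p^2,b_p$ can hold on a set of positive density''), proved via a no-atomic-measure argument on $\GSp_4(\Z_\ell)$, whereas you argue directly that each relation cuts out a proper subvariety of $\GSp_4/\F_\ell$ and let $\ell\to\infty$.
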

 
 \begin{proof}
 Since~$\End(A) = \Z$, the residual image is surjective for all sufficiently large~$p$ by~\cite{Serre},
Corollaire au Th\'{e}or\`{e}me~3. In order to ensure that the conductor of~$\rbar_p$ at a prime~$\ell$
dividing~$N$ matches that of~$A$, it suffices to take~$p$ co-prime to the  (finite) order of the component
group~$\Phi_{\mathcal{A}}$ of the N\'{e}ron model of~$A^{\vee}$ at~$\ell$.
 This proves the first claim.

For the second claim,
 let the characteristic polynomial of Frobenius (for~$p$ not dividing the discriminant on the \'{e}tale cohomology~$V_{\ell} = H^1(A,\Qbar_{\ell})$ at any prime~$\ell \ne p$) be
 $$X^4 + a_p X^3 + (2p + b_p) X^2 + p a_p X + p^2.$$
 Let the roots of this polynomial be~$\alpha$, $\beta$, $\alpha^{-1} p$ and~$\beta^{-1} p$
 respectively; by the Riemann hypothesis
 for curves (Weil bound) they are Weil numbers of absolute value~$\sqrt{p}$.
 Note that~$2 p + b_p$ is the trace of~$\Frob_p$ on~$\wedge^2 V_{\ell}$ for all
 but finitely many~$\ell$, and~$a^2_p$ is the trace of~$\Frob_p$ on~$V_{\ell} \otimes V_{\ell}$.
 We use the following lemma, which is essentially an observation of 
 Ogus (2.7.1 of~\cite{Ogus}).
 \begin{lemma} \label{lemma:ogus}
 There is no fixed linear relation between~$1$, $p$, $b_p$,  $a_p$, and~$a^2_p$ which
 can hold for a set~$p \in S$ of positive density.
 \end{lemma}
 \begin{proof}
 From such an equality, we can build two  
 finite dimensional representations~$A_{\ell}$ and~$B_{\ell}$ built out of copies
 of~$\wedge^2 V_{\ell}$, $\Q_p$, $\Q_p(1)$, $V_{\ell}$, and~$V_{\ell} \otimes V_{\ell}$
 respectively which have equal trace on~$\Frob_p$ for infinitely many~$p$.
 There must be at least one quadratic field with a positive density
 of inert primes in~$S$, twisting by this representation we arrive at a representation~$W_{\ell}$
 with a set~$S$ of positive density such that~$\Frob_p$ has trace zero for~$p \in S$.
 For sufficiently large~$\ell$, our assumptions on~$A$ implies (\cite{Serre})
 that the image of~$G_{\Q}$
 on~$V_{\ell}$ is~$\GSp_4(\Z_{\ell})$ if~$\End(A) = \Z$.
 By Cebotarev, it follows that the appropriate identity must also hold
 on an subset of these groups of positive measure. 
 Yet the distribution of the appropriate eigenvalues for~$\GSp_4(\Z_{\ell})$ does not have any atomic measure. 
In particular,
writing the eigenvalues in either case as~$x$, $y$, $\delta/x$, and~$\delta/y$, we would obtain an relation
 between the
polynomials
$$1, \quad \delta, \quad xy + \delta x/y + \delta y/x + \delta^2/xy,  \quad x+y+\delta/x + \delta/y, \quad (x+y+\delta/x + \delta/y)^2$$
 that holds on an open set (and consequently holds everywhere).
 There are no such relations by inspection.
 \end{proof}

Returning to the proof of Lemma~\ref{lemma:serre}, we deduce from the Weil bounds that $|a_p|^2 \le 16p$ and~$|b_p| \le 4p$.
 Hence, if we have any linear expression in~$a_p$, $a^2_p$, $b_p$ and~$1$ which
 is congruent to zero modulo~$p$ for a set of positive density,
 then it must also equal a constant multiple of~$p$ for a set of positive density,
 and we would obtain a contradiction by Lemma~\ref{lemma:ogus}.
 We show that this holds in each of the possible cases when our congruence
 above holds. (We take advantage of the symmetry in~$\alpha$ and~$\beta$
 and consider a reduced number of cases.)
 \begin{enumerate}
 \item Suppose that neither~$\beta$ and~$\beta^{-1} p$ are units.
Then~$b_p \equiv 0 \mod p$.
\item Suppose that~$\alpha \beta \equiv \vareps \mod p$ for some fixed~$\vareps
 \in \{\pm 1\}$. Then~$b_p \equiv \vareps \mod p$.
\item Suppose that~$\alpha = \vareps \mod p$  for some fixed~$\vareps
 \in \{\pm 1\}$. 
 Then
$b_p - \vareps a_p + 1 \equiv 0 \mod p$.
\item Suppose that~$\alpha - \beta \equiv 0 \mod p$. Then
$4 b_p - a^2_p \equiv 0 \mod p$.
 \end{enumerate}
 \end{proof}

\bibliographystyle{amsalpha}
\bibliography{CG}
 
 \end{document}